\newcommand{\N}{{\mathbb{N}}}
\newcommand{\Z}{{\mathbb{Z}}}
\newcommand {\OGE }{{\mathcal {O}_{G,E}}}
\newcommand{\CG}{{\mathcal{G}_{\text{tight}}^{(G,E)}}}
\newcommand {\SGE }{{\mathcal {S}_{G,E}}}
\newcommand{\Eu}{\widehat E_\infty}
\newcommand{\Et}{\widehat E_{\text{tight}}}
\newtheorem{lemma}{Lemma}[section]
\newtheorem{corollary}[lemma]{Corollary}
\newtheorem{theorem}[lemma]{Theorem}
\newtheorem{proposition}[lemma]{Proposition}
\newtheorem{remark}[lemma]{Remark}
\newtheorem{definition}[lemma]{Definition}
\newtheorem{noname}[lemma]{}
\begin{document}

\title[Arbitrary self-similar graphs]{$C^*$-algebras of self-similar graphs over arbitrary graphs}%

\author {Ruy Exel}
\address {Ruy Exel\\Departamento de Matem\'atica, Universidade Federal de Santa Catarina, 88040-970 Florian\'opolis SC, Brazil}
\email {exel@mtm.ufsc.br}\urladdr {http://www.mtm.ufsc.br/~exel/}

\author {Enrique Pardo}
\address {Enrique Pardo\\Departamento de Matem\'aticas, Facultad de Ciencias\\ Universidad de C\'adiz, Campus de
Puerto Real\\ 11510 Puerto Real (C\'adiz)\\ Spain.}
\email {enrique.pardo@uca.es}\urladdr {https://sites.google.com/a/gm.uca.es/enrique-pardo-s-home-page/}

\author{Charles Starling}
\address{Charles Starling\\Carleton University, School of Mathematics and Statistics, 4302 Herzberg Laboratories, 1125 Colonel By Drive, Ottawa, ON, Canada, K1S 5B6.}
\email{cstar@math.carleton.ca}\urladdr{https://carleton.ca/math/people/charles-starling/}

%\dedicatory{Draft, version 2.0}

\thanks{The first-named author was partially supported by CNPq. The third named author was partially supported by PAI III grant FQM-298 of the
Junta de Andaluc\'{\i }a, and by the DGI-MINECO and European Regional Development Fund, jointly, through grants MTM2014-53644-P and MTM2017-83487-P. The third named author was partially supported by a Carleton University internal research grant.}

\subjclass[2010]{46L05, 46L55}

\keywords{Self-similar graph, inverse semigroup, tight representation, tight groupoid, groupoid $C^*$-algebra}

%\date{\today}

\begin{abstract}
In this note we extend the construction of a $C^*$-algebra associated to a self-similar graph to the case of arbitrary countable graphs. We reduce the problem to the row-finite case with no sources, by using a desingularization process. Finally, we characterize simplicity in this case.
\end{abstract}
\maketitle

\section* {Introduction}

$C^*$-algebras of self-similar graphs were introduced by the first and second authors in \cite{EP1} for the case of discrete countable groups and finite graphs with no sources. Lately, B\'edos, Kaliszewski and Quigg \cite{BKQ} extended the definition to arbitrary groups and (topological) graphs, using a Cuntz-Pimsner picture of these algebras; unfortunately, they have not provided characterizations of simple or purely infinite for these algebras under this point of view. Also, no one has developed \underline{directly} the results in \cite{EP1} to the context of countable discrete groups and arbitrary graphs. This note we fill the gap, by reducing the problem row-finite graphs with no sources, and then showing how characterizations stated in \cite{EP1} works correctly under this restrictions. The results in this note are essential to extend the scope of the results obtained in \cite{EP1} for Katsura algebras over finite matrices to the general case; this guarantees, by \cite{Kat1}, that every Kirchberg algebra in the UCT is the full groupoid $C^*$-algebra of a second countable amenable ample groupoid.

\section{The case of finite graphs}\label{sec:ssE}

In this section we will recall the essential items needed to understand the algebras $\OGE$ associated to triples $(G,E, \varphi)$, introduced in \cite{EP1}. Let us recall the construction.

\begin{noname}\label{basicdata}
{\rm The basic data for our construction is a triple $(G,E,\varphi)$ composed of:
\begin{enumerate}
\item A finite directed graph $E=(E^0, E^1, r, s)$ without sources.
\item A discrete group $G$ acting on $E$ by graph automorphisms.
\item A 1-cocycle $\varphi: G\times E^1\rightarrow G$ satisfying the property
$$
\varphi(g,a)\cdot x=g\cdot x \text{ for every } g\in G, a\in E^1, x\in E^0.
$$
\end{enumerate}
}
\end{noname}
The property $(3)$ required on $\varphi$ is tagged $(2.3)$ in \cite{EP1}. As remarked in \cite{BKQ}, this condition can be weakened to
$$
\varphi(g,a)\cdot s(a)=g\cdot s(a) \text{ for every } g\in G, a\in E^1,
$$
without pain.

\begin{definition}\label{Def:OGE}
{\rm Given a triple $(G,E, \varphi)$ as in  (\ref{basicdata}), we define $\OGE$ to be the universal $C^*$-algebra as follows:
\begin{enumerate}
\item \underline{Generators}:
$$\{p_x : x\in E^0\}\cup\{s_a : a\in E^1\} \cup \{u_g : g\in G\}.$$
\item \underline{Relations}:
\begin{enumerate}
\item $\{p_x : x\in E^0\}\cup\{s_a : a\in E^1\}$ is a Cuntz-Krieger $E$-family in the sense of \cite{Raeburn}.
\item The map $u:G\rightarrow \OGE$ defined by the rule $g\mapsto u_g$ is a unitary $\ast$-representation of $G$.
\item $u_gs_a=s_{g\cdot a}u_{\varphi(g,a)}$ for every $g\in G, a\in E^1$.
\item $u_gp_x=p_{g\cdot x}u_g$ for every $g\in G, x\in E^0$.
\end{enumerate}
\end{enumerate}
}
\end{definition}
\noindent Notice that the relation (2a) in Definition \ref{Def:OGE} implies that there is a natural representation map
$$
\begin{array}{cccc}
\phi: & C^*(E) &\to   & \OGE  \\
 & p_x & \mapsto  & p_x \\
 & s_a & \mapsto  & s_a  
\end{array}
$$
which is injective \cite[Proposition 11.1]{EP1}.

Recall from \cite[Definition 4.1]{EP1} that given a triple $(G,E,\varphi)$ as in (\ref{basicdata}), we define an inverse semigroup $\SGE$ as follows:
\begin{enumerate}
\item The set is
$$\SGE=\{ (\alpha,g,\beta) : \alpha, \beta\in E^*, g\in G, s(\alpha)=gs(\beta)\}\cup \{ 0\}\},$$
where $E^*$ denotes the set of finite paths in $E$.
\item The operation is defined by:
$$(\alpha,g,\beta)\cdot (\gamma,h,\delta):=
\left\{
\begin{array}{cc}
 (\alpha, g\varphi(h,\varepsilon), \delta h\varepsilon), &   \text{if } \beta=\gamma \varepsilon  \\
   &  \\
 (\alpha g\varepsilon, \varphi(g,\varepsilon) h, \delta), &  \text{if } \gamma=\beta\varepsilon    \\
    &  \\
 0, & \text{otherwise,}
\end{array}
\right.
$$
and $(\alpha,g,\beta)^*:= (\beta,g^{-1}, \alpha)$.
\end{enumerate}

Then, we can construct the groupoid of germs of the action of $\SGE$ on the space of tight filters $\Et(\SGE)$
of the semilattice ${E}(\SGE)$ of idempotents of $\SGE$. In our concrete case, $\Et(\SGE)$ turns out to be 
 homeomorphic to the compact space $E^{\infty}$ of one-sided infinite paths on $E$; in particular, $\Et(\SGE)=\Eu(\SGE)$. Hence, the action of 
 $(\alpha,g,\beta)\in \SGE$ on $\eta=\beta\widehat{\eta}$  is given by the rule $(\alpha,g,\beta)\cdot \eta=\alpha (g\widehat{\eta})$. 
Thus, the groupoid of germs is
$$\CG=\{[\alpha,g,\beta;\eta] : \eta=\beta\widehat{\eta}\},$$
where $[s;\eta]=[t;\mu]$ if and only if $\eta=\mu$ and there exists $0\ne e^2=e\in \SGE$ such that $e\cdot\eta=\eta$ and $se=te$.
The unit space 
\[\CG^{(0)}=\{[\alpha,1,\alpha;\eta] : \eta=\alpha\widehat{\eta}\}\]
is identified with the one-sided infinite path space $E^{\infty}$, via the
homeomorphism $[\alpha,1,\alpha; \eta]   \mapsto   \eta.$
Under this identification, the range and source maps on
$\CG$ are:
\[s([\alpha,g,\beta;\beta \widehat{\eta}])= \beta\widehat{\eta} \quad \text{ and } \quad  
r([\alpha,g,\beta;\beta \widehat{\eta}])= \alpha(g \widehat{\eta}).\]

A basis for the topology on $\CG$ is given by compact open bisections of the form 
\[ \Theta(\alpha,g,\beta;Z(\gamma)):= \{ [\alpha, g, \beta;\xi ] \in \CG : \xi \in  Z(\gamma) \} \] 
where $\gamma \in E^*$ and  $Z(\gamma):=\{\gamma \widehat{\eta} : \widehat{\eta}\in E^{\infty}\}.$
Thus $\CG$ is locally compact and ample.  
In \cite{EP1} characterizations are given for when 
$\CG$ is Hausdorff \cite[Theorem 12.2]{EP1}, 
amenable \cite[Corollary 10.18]{EP1}, minimal \cite[Theorem 13.6]{EP1} or effective \cite[Theorem 14.10]{EP1} 
in terms of the properties of the triple $(G,E,\varphi)$ and the action of $\SGE$ on $E^{\infty}$.

%%%%%%%%%%%%%%%%%%%%%%%%%%%%%%%%%%%%%%%%%%%%%%%%%%%%%%%%%%%

\section{Extending to the countable case}

In this section we will look at the problem of extending our class to the case of countably infinite graphs with no restrictions (i.e. sources, sinks and infinite receivers are admitted).

Notice that, if $E^0$ is countably infinite, then the algebra $C^*(E)$ is not longer unital. Thus, if we pretend to get a unitary representation of the group $G$ associated to the algebra, we need to consider unitary representations of $G$ in the multiplier algebra $\mathcal{M}(\OGE)$. This is not a good idea for giving an intrinsic definition of the object, but it is very helpful to deal with the details in the definition. Nevertheless, the model we will follow is that of Katsura algebras \cite{Kat1}, where the unitary associated to an element of $\Z$ is written in terms of partial unitaries associated to the projections $p_x$ for $x\in E^0$.\vspace{.2truecm}

\begin{noname}\label{basicdata2}
{\rm The basic data for our construction is a triple $(G,E,\varphi)$ composed of:
\begin{enumerate}
\item A countable directed graph $E=(E^0, E^1, r, s)$.
\item A discrete group $G$ acting on $E$ by graph automorphisms.
\item A 1-cocycle $\varphi: G\times E^1\rightarrow G$ satisfying the property
$$
\varphi(g,a)\cdot x=g\cdot x \text{ for every } g\in G, a\in E^1, x\in E^0.
$$
\end{enumerate}
}
\end{noname}
As remarked before, this condition can be weakened to
$$
\varphi(g,a)\cdot s(a)=g\cdot s(a) \text{ for every } g\in G, a\in E^1,
$$
without pain. Notice that all the results in \cite[Section 2]{EP1} are true for triples $(G,E, \varphi)$ as in (\ref{basicdata2}). So, assume that we have a such triple $(G,E, \varphi)$. To any $x\in E^0$ and any $g\in G$ we will associate an element $u_{g,x}\in \OGE$. Since we want that $\sum\limits_{x\in E^0}u_{g,x}$ converges to a unitary $u_g\in \mathcal{M}(\OGE)$ in the strong topology, our idea is to think that $u_{g,x}=u_gp_x$ (recall that $1_{\mathcal{M}(\OGE)}=\sum\limits_{x\in E^0}p_{x}$). Using the relations enjoyed by $u_gp_x$ in the original definition, we conclude that $u_{g,x}$ is a partial isometry with:
\begin{enumerate}
\item $u_{g,x}u_{g,x}^*=p_{g\cdot x}$.
\item $u_{g,x}^*u_{g,x}=p_{x}$.
\item $u_{g, s(a)}s_a=s_{g\cdot s(a)}u_{\varphi (g,a), s(a)}$.
\item $u_{g,x}p_x=p_{g\cdot x}u_{g,x}$.
\end{enumerate}

With this definition, $\sum\limits_{x\in E^0}u_{g,x}$ converges to an element $u_g\in \mathcal{M}(\OGE)$, and it is easy to see that it is a unitary. Since we are interested in getting a unitary representation of $G$ in $\mathcal{M}(\OGE)$, we need to be sure that $u_gu_h=u_{gh}$ for every $g,h\in G$. A simple computation shows that this occurs whenever 
$$u_{gh, h^{-1}\cdot x}=u_{g,x}u_{h, h^{-1}\cdot x}$$
for every $g,h\in G$ and every $x\in E^0$. In view of all that facts, we obtain the following definition

\begin{definition}\label{Def:OGE_infty}
{\rm Given a triple $(G,E, \varphi)$ as in  (\ref{basicdata2}), we define $\OGE$ to be the universal $C^*$-algebra as follows:
\begin{enumerate}
\item \underline{Generators}: 
$$\{p_x\mid x\in E^0\}\cup\{s_a\mid a\in E^1\} \cup \{u_{g,x}\mid g\in G,\, x\in E^0\}.$$
\item \underline{Relations}:
\begin{enumerate}
\item $\{p_x\mid x\in E^0\}\cup\{s_a\mid a\in E^1\}$ is a Cuntz-Krieger $E$-family in the sense of \cite{Raeburn}.
%\item The map $u:G\rightarrow \OGE$ defined by the rule $g\mapsto u_g$ is a unitary $\ast$-representation of $G$.
\item $u_{g,x}$ is a partial isometry with:
\begin{enumerate}
\item $u_{g,x}u_{g,x}^*=p_{g\cdot x}$.
\item $u_{g,x}^*u_{g,x}=p_{x}$.
\end{enumerate}
\item $u_{gh, h^{-1}\cdot x}=u_{g,x}u_{h, h^{-1}\cdot x}$ for every $g,h\in G$ and $x\in E^0$.
\item $u_{g, s(a)}s_a=s_{g\cdot a}u_{\varphi(g,a), s(a)}$ for every $g\in G, a\in E^1$.
\item $u_{g,x}p_x=p_{g\cdot x}u_{g, x}$ for every $g\in G, x\in E^0$.
\end{enumerate}
\end{enumerate}
}
\end{definition}

\begin{remark}\label{Rem:OGE_infty}
{\rm
When $E^0$ is finite, Definition \ref{Def:OGE_infty} coincides with Definition \ref{Def:OGE}. Moreover, we have:
\begin{enumerate}
\item $u_g:=\sum\limits_{x\in E^0}u_{g,x}$ is a unitary in $\mathcal{M}(\OGE)$.
\item The map $u:G\rightarrow \mathcal{M}(\OGE)$ defined by the rule $g\mapsto u_g$ is a unitary $\ast$-representation of $G$.
\item $u_{g,x}=u_gp_x$ for every $g\in G$ and $x\in E^0$.
\item $u_gp_x=p_{gx}u_{\varphi(g,a)}$ for every $g\in G$ and $x\in E^0$.
\item $u_gs_a=s_{ga}u_{\varphi(g,a)}$ for every $g\in G$ and $a\in E^1$.
\end{enumerate}
}
\end{remark}

Following the same structure, we will need to associate an abstract inverse semigroup to this algebra. The natural one will be

\begin{definition}\label{Def:inverseSemigroup_infty}
{\rm Given a pair $(G,E)$ as in  (\ref{basicdata}), we define a $\ast$-semigroup $\widehat{\SGE}$ as follows. As a set,
$$\widehat{\SGE}=\{ (\alpha,(g,x),\beta)\mid \alpha,\beta\in E^*, g\in G, g\cdot x=s(\alpha)=g\cdot s(\beta)\}\cup \{ 0\}.$$
The operation is defined as in \cite[Definition 4.1]{EP1}, and the semilattice of idempotents is
$$E(\widehat{\SGE})=\{\alpha , (1,x), \alpha) \mid \alpha \in E^*, s(\alpha)=x\}.$$
}
\end{definition}

Since $x\in E^0$ in the definition must to coincide with $s(\alpha)=g\cdot s(\beta)$, we conclude that $\widehat{\SGE}$ and  $\SGE$ are essentially identical as $\ast$-semigroups, and thus we will switch the notation of the semigroup to $\SGE$. So, with small adaptations, \cite[Sections 3 \& 4]{EP1} hold. Moreover, there is an semigroup homomorphism
$$
\begin{array}{cccc}
 \pi :& \SGE & \longrightarrow   & \OGE  \\
 & (\alpha, (g,x), \beta) & \mapsto  &   s_{\alpha}u_{g,x}s_{\beta}^*
\end{array}.
$$

The essential point is the following result

\begin{theorem}\label{Thm:GroupoidRowFiniteNoSources}
Let $(G,E, \varphi)$ be a triple as in (\ref{basicdata2}) such that $E$ is a row-finite graph with no sources. Then:
\begin{enumerate}
\item The semigroup homomorphism $ \pi :\SGE \rightarrow \OGE $ is a universal tight representation of $\SGE$.
\item $\OGE\cong C^*_{\text{tight}}(\SGE)\cong C^*(\CG)$.
\end{enumerate}
\end{theorem}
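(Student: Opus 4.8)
The plan is to establish the three assertions in order: that $\pi$ is a representation of $\SGE$ by partial isometries, that it is tight, and that it is universal among tight representations; the final isomorphism with $C^*(\CG)$ will then follow from Exel's general identification of the tight $C^*$-algebra of an inverse semigroup with the $C^*$-algebra of its groupoid of germs on the tight spectrum. First I would verify that $\pi$ is a $*$-semigroup homomorphism into the partial isometries of $\OGE$. Using relations (1)--(5) of Definition \ref{Def:OGE_infty}, each element $\pi((\alpha,(g,x),\beta))=s_\alpha u_{g,x}s_\beta^*$ is a partial isometry, $\pi$ sends the involution of $\SGE$ to the adjoint, and it respects the two cases of the product in \cite[Definition 4.1]{EP1}. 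In particular each idempotent $(\alpha,(1,x),\alpha)$ with $s(\alpha)=x$ is carried to the projection $s_\alpha s_\alpha^*$. Since the idempotent semilattice $E(\SGE)$ carries no group data, these computations are exactly those of \cite[Sections 3 \& 4]{EP1} and transfer verbatim to the countable, row-finite, no-sources setting.

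Next I would verify that $\pi$ is tight. As the tightness condition is formulated entirely in terms of finite covers in $E(\SGE)$, it suffices to check it on the projections $s_\alpha s_\alpha^*$. Here the row-finite, no-sources hypothesis is essential: every $x\in E^0$ receives only finitely many edges and emits at least one, so the Cuntz--Krieger relation $p_x=\sum_{r(a)=x}s_a s_a^*$ is a genuine finite identity, which is precisely what forces the join of a cover to be realized as a sum of orthogonal projections. Under the identification $\Et(\SGE)=\Eu(\SGE)\cong E^{\infty}$ recalled in Section \ref{sec:ssE}, tightness of $\pi$ becomes equivalent to the Cuntz--Krieger relation holding at each vertex, which is automatic in any Cuntz--Krieger $E$-family; I would cite the corresponding statement from \cite{EP1}.

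For universality, let $\tau$ denote the universal tight representation generating $C^*_{\text{tight}}(\SGE)$. Since $\pi$ is tight, it induces a $*$-homomorphism $\bar\pi\colon C^*_{\text{tight}}(\SGE)\to\OGE$. Conversely I would set $p_x:=\tau((x,(1,x),x))$, $s_a:=\tau((a,(1,s(a)),s(a)))$ and $u_{g,x}:=\tau((g\cdot x,(g,x),x))$ inside $C^*_{\text{tight}}(\SGE)$, and check that these elements satisfy relations (1)--(5) of Definition \ref{Def:OGE_infty}: the Cuntz--Krieger relations come exactly from tightness, while (2)--(5) are immediate from the product in $\SGE$. The universal property of $\OGE$ then yields a $*$-homomorphism $\OGE\to C^*_{\text{tight}}(\SGE)$, and comparing the two maps on generators shows they are mutually inverse, which proves (1) and the first isomorphism of (2). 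The second isomorphism $C^*_{\text{tight}}(\SGE)\cong C^*(\CG)$ is then Exel's general theorem applied to the groupoid $\CG$ described in Section \ref{sec:ssE}.

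The main obstacle I anticipate is the tightness verification together with the bookkeeping required to pass from the global-unitary relations used in \cite{EP1} to the local partial-isometry relations of Definition \ref{Def:OGE_infty} in the non-unital setting. One must check carefully that the multiplier unitary $u_g=\sum_{x}u_{g,x}$ and the sums in the Cuntz--Krieger relations behave correctly when $E^0$ is infinite, and, crucially, that the presentation of $\OGE$ imposes neither more nor fewer relations than those characterizing the universal tight representation of $\SGE$.
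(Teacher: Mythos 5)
Your proposal is correct and follows essentially the same route as the paper, which simply observes that the proofs of \cite[Proposition 6.2]{EP1} (tightness of $\pi$) and \cite[Theorem 6.3]{EP1} (universality) go through verbatim for row-finite graphs without sources, and then invokes \cite[Theorem 2.4]{Exel2} for the identification $C^*_{\text{tight}}(\SGE)\cong C^*(\CG)$; your write-up just makes explicit the steps those citations contain, including the key observation that row-finiteness and absence of sources make the Cuntz--Krieger relation a finite identity at each vertex. One small caution: the identification $\Et(\SGE)=\Eu(\SGE)$ for infinite $E$ is only established later in Proposition \ref{Prop:UltraIsTight}, so your tightness argument should rest on the finite-cover formulation alone (as it essentially does) rather than on that identification.
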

\begin{proof}
(1) If $E$ is a row-finite graph with no sources, then the proofs of \cite[Proposition 6.2]{EP1} and \cite[Theorem 6.3]{EP1} works correctly.

(2) Because of (1), we can use the proof of \cite[Theorem 2.4]{Exel2} to conclude the desired result. 
\end{proof}

\vspace{.2truecm}

In order to extend the results in \cite{EP1} to this context, we will need to reduce ourselves to a situation in which the graph $E$ may be assumed to be row-finite without sources. We will show that this is possible via a ``desingularization'' process, inspired in the one developed in \cite{DrinTom} for graph $C^*$-algebras.

%%%%%%%%%%%%%%%%%%%%%%%%%%%%%%%%%%%%%%%%%%%%%%%%%%%%%%%%%%%

\section{Desingularizing triples}\label{sec:desing}

Suppose we have a triple $(G,E, \varphi)$ as in (\ref{basicdata2}), and let $F$ denote the desingularized graph of $E$ obtained in \cite{DrinTom}. In this section we will show that we can define an action $G\curvearrowright F$ and a $1$-cocycle $\widehat{\varphi}:G\times F\rightarrow G$ extending the ones in the original triple such that $\OGE$ and $\mathcal{O}_{G,F}$ are strong Morita equivalent $C^*$-algebras.

\begin{remark}\label{rem:action_sourcesreceivers}
{\rm
Let $(G,E, \varphi)$ be a triple as in (\ref{basicdata2}), and let $x\in E^0$ be a vertex. Then:
\begin{enumerate}
\item If $x$ is a source, then so is $gx$ for every $g\in G$.
\item If $x$ is an infinite receiver, then so is $gx$ for every $g\in G$.
\end{enumerate}
In view of that, when defining the desingularization we need to keep track of the fact that, for any vertex in the orbit o a singular vertex, we must define the tails added to it as orbit-connected parts of the graph.
}
\end{remark}

%%%%%%%%%%%%%%%%%%%%%%%%%

\subsection{Desingularizing a source}\label{ssec:source}

Now, we will explain how to construct a triple $(G,F, \varphi)$ that desingularize a source in a triple $(G,E, \varphi)$. \vspace{.2truecm}

Let $x\in E^0$ be a source. Then:
\begin{enumerate}
\item We define a tail $\{e_i\}_{i\geq 1}\subset F^1$ so that $s(e_i)=r(e_{i+1})$ for every $i\geq 1$, and $r(e_1)=x$.
\item For any $h\in \mbox{St}_G(x)$, we define $he_i=e_i$ for every $i\geq 1$.
\item If $\widehat{G}$ is a set of representatives of the orbits of $x$ under the action $G\curvearrowright E$, then for each $g\in \widehat{G}$ we define $\{e_{i,g}\}_{i\geq 1}\subset F^1$ so that $s(e_{i,g})=r(e_{i+1,g})$ for every $i\geq 1$, and $r(e_{1,g})=gx$, while $ge_i=e_{i,g}$  for every $i\geq 1$.
\end{enumerate}
What we are doing is applying the Drinen-Tomforde desingularization construction in such a way that is coherent with the action $G\curvearrowright E$. 

Next step is to extend the $1$-cocycle. To do this, for any $g,h\in G$ and any $i\geq 1$ we define $\widehat{\varphi} (h, e_{i,g}):=h$. Once this is done, what we have obtained is:
\begin{enumerate}
\item[(4)] A graph $F$ extending $E$, constructed using the Drinen-Tomforde desingularization process.
\item[(5)] An action $G\curvearrowright F$ extending the original action $G\curvearrowright E$, taking care of Remark \ref{rem:action_sourcesreceivers}.
\item[(6)] A $1$-cocycle $\widehat{\varphi}:G\times F\rightarrow G$ extending ${\varphi}:G\times E\rightarrow G$.
\end{enumerate}

Then, the triple $(G,F, \widehat{\varphi})$ is the desingularization of $(G,E, \varphi)$ on to the source $x$.

%%%%%%%%%%%%%%%%%%%%%%%%%

\subsection{Desingularizing an infinite receiver}\label{ssec:emiter}

Now, we will explain how to construct a triple $(G,F, \varphi)$ that desingularize an infinite receiver in a triple $(G,E, \varphi)$. \vspace{.2truecm}

Let $x\in E^0$ be an infinite receiver, and list $r^{-1}(x)=\{a_i\}_{i\geq 1}$. Then:
\begin{enumerate}
\item We define a tail $\{e_1\}_{i\geq 1}\subset F^1$ so that $s(e_i)=r(e_{i+1})$ for every $i\geq 1$, and $r(e_1)=x$.
\item We define $v_0=x$ and for any $i\geq 1$, $v_i=s(e_i)$.
\item For each $a_j\in r^{-1}(x)$, we define an edge $f_j\in F^1$ from $s(a_j)$ to $v_{j-1}=s(e_j)$.
\item We remove the edges $\{a_i\}_{i\geq 1}$ from $F$.
\item For each $j\geq 1$ we define paths $\alpha_j:=e_1e_2\dots e_{j-1}f_j$.
\item For any $h\in \mbox{St}_G(x)$, we define $he_i=e_i$ and $hf_i=f_i$ for every $i\geq 1$.
\item If $\widehat{G}$ is a set of representatives of the orbits of $x$ under the action $G\curvearrowright E$, then for each $g\in \widehat{G}$ we define $\{e_{i,g}\}_{i\geq 1}\cup \{f_{i,g}\}_{i\geq 1}\subset F^1$ so that $s(e_{i,g})=r(e_{i+1,g})$, $r(f_{i,g})=gr(e_i)$ and $s(f_{i,g})=gs(a_i)$ for every $i\geq 1$, $r(e_{1,g})=gx$, while $ge_i=e_{i,g}$ and $gf_i=f_{i,g}$  for every $i\geq 1$.
\end{enumerate}
What we are doing is applying the Drinen-Tomforde desingularization construction in such a way that is coherent with the action $G\curvearrowright E$. 

Next step is to extend the $1$-cocycle. What we do is, to any $g,h\in G$ and any $i\geq 1$, we define:
\begin{enumerate}
\item[(7)] $\widehat{\varphi} (h, e_{i,g}):=h$. 
\item[(8)] $\widehat{\varphi} (h, f_{i,g})=\varphi(h, a_j)$.
\end{enumerate}
In particular, $\widehat{\varphi} (h, \alpha_{i,g})=\varphi(h, a_j)$. Once this is done, what we have obtained is:
\begin{enumerate}
\item[(9)] A graph $F$ extending $E$, constructed using the Drinen-Tomforde desingularization process.
\item[(10)] An action $G\curvearrowright F$ extending the original action $G\curvearrowright E$, taking care of Remark \ref{rem:action_sourcesreceivers}.
\item[(11)] A $1$-cocycle $\widehat{\varphi}:G\times F\rightarrow G$ extending ${\varphi}:G\times E\rightarrow G$.
\end{enumerate}

Then, the triple $(G,F, \widehat{\varphi})$ is the desingularization of $(G,E, \varphi)$ on to the infinite receiver $x$.

%%%%%%%%%%%%%%%%%%%%%%%%%

\subsection{The desingularization result}\label{ssec:DesingThm}

Now, we will check that the results of Drinen and Tomforde about their desingularization process for $C^*(E)$ extend to this context.

A simple inspection shows that \cite[Lemmas 2.9 \& 2.10]{DrinTom} extend to our context. Now, we will arrange the proof of \cite[Theorem 2.11]{DrinTom} in order to obtain the desired Morita equivalence between $\OGE$ and $\mathcal{O}_{G,F}$. We will follow the notation of the proof of \cite[Theorem 2.11]{DrinTom}. Let $E$ be a graph with a singular vertex $v_0$, let 
$$\{t_e,q_v \mid e\in F^1, v\in F^0\}$$
be the canonical set of generators for $C^*(F)$, and let
$$\{s_e,p_v \mid e\in E^1, v\in E^0\}$$
be the Cuntz-Krieger $E$-family constructed \textbf{into} $C^*(F)$ in \cite[Lemma 2.9]{DrinTom}. Recall that $\{s_e,p_v\}$ is defined as follows:
\begin{enumerate}
\item For every $v\in E^0$, $p_v:=q_v$.
\item For every $e\in E^1$ such that $r(e)\in E^0_{\text{rg}}$, $s_e:=t_e$.
\item For every $e\in E^1$ such that $r(e)\not\in E^0_{\text{rg}}$, we have that $e=a_j$ for some $j\geq 1$, and thus $s_e:=t_{\alpha_j}$.
\end{enumerate}

Define $B:=C^*(\{s_e,p_v\})$ and $p:=\sum\limits_{v\in E^0}q_v\in \mathcal{M}(C^*(F))$. Then, \cite[Theorem 2.11]{DrinTom} shows that
$$C^*(E)\cong B\cong pC^*(F)p$$
and that $p\in \mathcal{M}(C^*(F))$ is a full projection.

Now, observe that:
\begin{enumerate}
\item For every $g\in G$ and $x\in E^0$ we have that $u_gp_x=u_gq_x=q_{gx}u_g=p_{gx}u_g$.
\item If $r(e)\in E^0_{\text{rg}}$, then $u_gs_e=u_gt_e=t_{ge}u_{\widehat{\varphi}(g,e)}=s_{ge}u_{{\varphi}(g,e)}$, since $\varphi$ and $\widehat{\varphi}$ matches on $G\times E$.
\item If $r(e)\not\in E^0_{\text{rg}}$, then
$$u_gs_e=u_gt_{\alpha_j}=u_gt_{e_1}t_{e_2}\cdots t_{e_{j-1}}t_{f_j}=t_{ge_1}t_{ge_2}\cdots t_{ge_{j-1}}u_gt_{f_j}=$$
$$t_{ge_1}t_{ge_2}\cdots t_{ge_{j-1}}t_{gf_j}u_{\widehat{\varphi}(g, f_j)}=t_{g\alpha_j}u_{\widehat{\varphi}(g, f_j)}=s_{ge}u_{\varphi(g,e)}$$ 
by definition of $\widehat{\varphi}$ in this case.
\end{enumerate}

Thus, the $C^*$-algebra isomorphism
$$
\begin{array}{cccc}
\Phi: & C^*(E) & \rightarrow   & C^*(\{s_e, q_x\})  \\
 & P_x & \mapsto  &  q_x \\
 & T_e & \mapsto  &   s_e
\end{array}
$$
satisfies that:
\begin{enumerate}
\item For every $g\in G$ and $x\in E^0$, $u_g\Phi(P_x)=\Phi(P_{gx})u_g$.
\item For every $g\in G$ and $e\in E^1$, $u_g\Phi(T_e)=\Phi(T_{ge})u_{\widehat{\varphi}(g,e)}$.
\end{enumerate}

Hence, by the universal property of $\OGE$, $Phi$ extends to a $C^*$-algebra isomorphism
$$\Phi: \OGE \rightarrow  C^*(\{u_g, s_e, q_x\})\subseteq \mathcal{O}_{G,F}.$$
Moreover, following \cite[Theorem 2.11]{DrinTom}, we have a $C^*$-isomorphism
$$
\begin{array}{cccc}
\Psi: & \Phi(C^*(E)) & \rightarrow   & pC^*(F)p  \\
 & s_{\alpha}s_{\beta}^* & \mapsto  &  ps_{\alpha}s_{\beta}^*p \\
\end{array}.
$$
Notice that in $pC^*(F)p$, for every $g\in G$ and $x\in E^0$ we have that $pu_gp_xp=pp_{gx}u_gp$, and since $u_gp=pu_g$ by definition of $p$, this is equal to $pp_{gx}pu_g$. Since a homomorphism from $\OGE$ to $p\mathcal{O}_{G,F}p$ extending $\Psi$ should send $u_gp_x\mapsto pu_gpxp$ and $p_{gx}u_g \mapsto pp_{gx}pu_g$, a such extension will be compatible with the defining relations of $\OGE$. Similarly for every $g\in G$ and $a\in E^1$ we have that $u_gs_a$ and $s_{ga}u_{\widehat{\varphi}(g,a)}$ will map to the same element in $p\mathcal{O}_{G,F}p$. So, again by the universal property of $\OGE$, the isomorphism $\Psi$ extends to a $C^*$-algebra isomorphism $\OGE\cong p\mathcal{O}_{G,F}p$. Also, as in \cite[Theorem 2.11]{DrinTom}, $p\in \mathcal{M}(C^*(F))\subset \mathcal{M}(\mathcal{O}_{G,F})$ is a full projection.

Summarizing

\begin{theorem}\label{Thm:Desin}
Let $(G,E, \varphi)$ be a triple, and let $(G,F, \widehat{\varphi})$ be its desingularization. Then, there exists a full projection $p\in \mathcal{M}(\mathcal{O}_{G,F})$ such that $\OGE\cong p\mathcal{O}_{G,F}p$. In particular, $\OGE$ and $\mathcal{O}_{G,F}$ are strongly Morita equivalent.
\end{theorem}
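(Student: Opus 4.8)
The plan is to follow the Drinen-Tomforde desingularization strategy for ordinary graph $C^*$-algebras \cite[Theorem 2.11]{DrinTom} and upgrade it so that it respects the group action and the cocycle. The idea is to realize a copy of $\OGE$ inside $\mathcal{O}_{G,F}$ as the subalgebra generated by the Drinen-Tomforde Cuntz-Krieger $E$-family $\{s_e, q_x\}$ together with the multiplier unitaries $\{u_g\}$, and then to recognize this subalgebra as the corner $p\mathcal{O}_{G,F}p$ for the projection $p = \sum_{x \in E^0} q_x$.

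First I would invoke the purely graph-algebra level result: by \cite[Lemmas 2.9 \& 2.10]{DrinTom} (which extend to our context) the family $\{s_e, q_x\}$ built inside $C^*(F)$ is a genuine Cuntz-Krieger $E$-family generating a subalgebra isomorphic to $C^*(E)$, and by \cite[Theorem 2.11]{DrinTom} this subalgebra coincides with $pC^*(F)p$, with $p$ a full projection in $\mathcal{M}(C^*(F))$. This gives the backbone onto which the self-similar structure must be grafted.

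The technical heart is to check that the enlarged family $\{u_g, s_e, q_x\}$ satisfies the defining relations of $\OGE$ from Definition \ref{Def:OGE_infty}. The relation $u_g q_x = q_{gx} u_g$ is immediate since $p_x = q_x$. The relation $u_g s_e = s_{ge} u_{\varphi(g,e)}$ splits according to the nature of $r(e)$: when $r(e)$ is a regular vertex one has $s_e = t_e$, so the relation is just the corresponding relation of $\mathcal{O}_{G,F}$ together with the fact that $\widehat{\varphi}$ restricts to $\varphi$ on $G \times E^1$; when $r(e)$ is singular, $e = a_j$ and $s_e = t_{\alpha_j}$ runs along the added tail, so one commutes $u_g$ successively past $t_{e_1}, \dots, t_{e_{j-1}}$ (where $\widehat{\varphi}(g, e_i) = g$) and finally past $t_{f_j}$ (where $\widehat{\varphi}(g, f_j) = \varphi(g, a_j)$), recovering $s_{ge} u_{\varphi(g,e)}$. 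This computation is precisely what the definition of $\widehat{\varphi}$ on the tail edges was engineered to make work, and it is the step I expect to be the main obstacle.

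Once the relations hold, the universal property of $\OGE$ furnishes a $*$-homomorphism $\Phi : \OGE \to \mathcal{O}_{G,F}$ landing inside $p\mathcal{O}_{G,F}p$. Extending the Drinen-Tomforde corner isomorphism $C^*(E) \cong pC^*(F)p$ by setting $u_{g,x} \mapsto p u_g q_x p$, and verifying compatibility with the relations of $\OGE$ using $u_g p = p u_g$, shows $\Phi$ is an isomorphism onto $p\mathcal{O}_{G,F}p$. Fullness of $p$ in $\mathcal{M}(\mathcal{O}_{G,F})$ follows from its fullness in $\mathcal{M}(C^*(F))$ together with $C^*(F) \subseteq \mathcal{O}_{G,F}$, and strong Morita equivalence of $\OGE \cong p\mathcal{O}_{G,F}p$ with $\mathcal{O}_{G,F}$ is the standard fact that a full corner is Morita equivalent to the ambient algebra via the imprimitivity bimodule $p\mathcal{O}_{G,F}$. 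Beyond the singular-vertex relation check, the secondary delicate point is injectivity of $\Phi$, where one must ensure that extending the corner isomorphism to incorporate the unitaries collapses nothing; here the coherence of the desingularization with the $G$-action recorded in Remark \ref{rem:action_sourcesreceivers} is exactly what guarantees consistency.
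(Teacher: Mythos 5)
Your proposal follows essentially the same route as the paper's own proof: build the Drinen--Tomforde Cuntz--Krieger $E$-family $\{s_e,q_x\}$ inside $C^*(F)$, verify that adjoining the unitaries $u_g$ respects the relations of Definition \ref{Def:OGE_infty} (splitting into the regular-receiver case $s_e=t_e$ and the singular case $s_e=t_{\alpha_j}$, where $u_g$ is pushed through the tail using $\widehat{\varphi}(g,e_i)=g$ and $\widehat{\varphi}(g,f_j)=\varphi(g,a_j)$), and then invoke the universal property of $\OGE$ together with the corner identification $p\mathcal{O}_{G,F}p$ and fullness of $p$. The steps you flag as delicate (the singular-vertex relation check and the injectivity of the extended map) are exactly the ones the paper handles, in the same way.
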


Hence, up to Morita equivalence, we can assume that $E$ is a countable, row-finite graph with no sources, and thus is a full groupoid $C^*$-algebra by Theorem \ref{Thm:GroupoidRowFiniteNoSources}.

%%%%%%%%%%%%%%%%%%%%%%%%%%%%%%%%%%%%%%%%%%%%%%%%%%%%%%%%%%%

\section{Characterizing properties of $\OGE$}\label{sec:properties}

Now, we are ready to extend the characterizations of various properties of $\CG$ (and thus, the simplicity of $\OGE$) obtained in \cite{EP1} to the case of triples $(G,E,\varphi)$ with $E$ countable arbitrary graph; in this sense, recall that properties like Conditions (L) and (K), or cofinality, are preserved through the desingularization process, as shown in \cite{DrinTom}. 

Since several arguments in \cite{EP1} uses the fact that $\Et(\SGE)\cong E^{\infty}$ when $E$ is a finite graph without sources, we need to prove this fact in the case of $E$ being infinite. Because of Theorem \ref{Thm:Desin} and the previous remark, we can assume without loss of generality that $E$ is row-finite without sources. So, we will keep that fact in force for the remain of the section.

%%%%%%%%%%%%%%%%%%%%%%%%%

\subsection{A technical issue}\label{ssec:Technical}

Suppose that $(G,E,\varphi)$ with $E$ row-finite graph without sources. Then, it is easy to see that $\Eu(\SGE)\cong E^{\infty}$. But since $E$ is infinite, the space of filters $\widehat{E}_0$ is locally compact but not compact, whence we cannot guarantee that $\Eu(\SGE)=\Et(\SGE)$; the only we know is that $\Eu(\SGE)$ is a dense subspace of $\Et(\SGE)$. \vspace{.2truecm}

Let us recall the characterization of ultrafilters given in \cite[Lemma 12.3]{Exel1}:\vspace{.2truecm}

\noindent\emph{``A filter $\xi$ is an ultrafilter if and only if for $f\in E(\SGE)$, if $f\Cap e$ for every $e\in \xi$, then $f\in \xi$''.}\vspace{.2truecm}

Also, we need to recall the characterization of tight filters given in \cite[Theorem 12.9]{Exel1}:\vspace{.2truecm}

\noindent\emph{``A filter $\xi$ is a tight filter if and only if for every $X,Y\subset E(\SGE)$ finite subsets and for every $Z\subset E(\SGE)^{X,Y}$ finite cover one has that $X\subset \xi$ and $Y\cap \xi=\emptyset$ implies that $Z\cap \xi\ne\emptyset$''.}\vspace{.2truecm}

Given any $\omega\in E^{\infty}$ and any $n\in \N$, we denote $f_{\omega_{\vert n}}=(\omega_{\vert n},(1, s(\omega_{\vert n})),\omega_{\vert n})\in E(\SGE)$, and we define $\mathcal{F}_{\omega}:=\{f_{\omega_{\vert n}} \mid n\in \N\}$. Then, the map
$$
\begin{array}{cccc}
\tau: & E^{\infty} & \rightarrow  & \Eu(\SGE)  \\
 & \omega & \mapsto  & \mathcal{F}_{\omega}  
\end{array}
$$
is a homeomorphism. In particular, if $\xi \in \Eu(\SGE) $, then $\xi$ is an infinite set of idempotents.

Now, suppose that $\xi\in \Et(\SGE)\setminus \Eu(\SGE)$. We have two options:
\begin{enumerate}
\item If $\xi$ is an infinite set, for each $n\in \N$ define $\xi_n:=\{e\in \xi \mid e=(\alpha, (1, s(\alpha)),\alpha) \text{ with  } \vert \alpha \vert =n\}$. Given $e,f\in \xi_n\subset \xi$, $0\ne ef$ because $\xi$ is a filter, and then $e=f$. Thus, for any $n\in \N$ we have that $\vert \xi_n\vert\leq 1$.

Now, for $n\leq m$ natural numbers, suppose that $\xi_n=\{(\alpha, (1, s(\alpha)), \alpha)\}$ and $\xi_m=\{(\beta, (1, s(\beta)), \beta)\}$. Again for the fact that $\xi$ is a filter, we conclude that $\alpha$ is the prefix $\beta_{\vert n}$ of $\beta$. In particular, if for some $m\in \N$ we have $\xi_m=\{(\beta, (1, s(\beta)), \beta)\}$, then for any $n\leq m$ we have $\xi_m=\{(\beta_{\vert n}, (1, s(\beta_{\vert n})), \beta_{\vert n})\}$. Thus, for any $n\in \N$ we have that $\vert \xi_n\vert= 1$.

So, we can construct $\omega \in E^{\infty}$ such that, for each $n\in \N$, $\xi_n=\{f_{\omega_{\vert n}}\}$. Hence, $\mathcal{F}_{\omega}\subseteq \xi$, and since $\mathcal{F}_{\omega}$ is maximal, we conclude that $\mathcal{F}_{\omega}= \xi$, contradicting the assumption.
\item If $\xi$ is a finite set, and $\vert \xi \vert =n$, the same argument as in case (1) states that the idempotents in $\xi$ are associated to prefixes of a fixed finite path $\alpha$. Now, take any  $X\subseteq \xi$ and any finite subset $Y\subseteq E(\SGE)\setminus \xi$. Since $E$ has no sources, be can chose a finite path $\beta$ of positive length such that $f:=(\alpha \beta, (1, s(\beta)), \alpha \beta)\not\in Y$. Thus, 
$$Z:=\{f\}\subset E(\SGE)^{X,Y}=\bigcap\limits_{x\in X}\mathcal{J}_x\cap \bigcap\limits_{y\in Y}\mathcal{J}^{\perp}_y.$$
But $f\Cap e$ for every $e\in \xi$, and since $\xi$ is a tight filter, we conclude that $f\in \xi$, contradicting the choice of $f$.
\end{enumerate}

Summarizing

\begin{proposition}\label{Prop:UltraIsTight}
If $(G,E,\varphi)$ is a triple with $E$ row-finite graph without sources, then $\Et(\SGE)= \Eu(\SGE)$.
\end{proposition}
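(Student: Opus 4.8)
The plan is to establish the non-trivial inclusion $\Et(\SGE)\subseteq\Eu(\SGE)$, since the reverse inclusion (every ultrafilter is tight) always holds and was already observed—indeed $\Eu(\SGE)$ is dense in $\Et(\SGE)$. First I would fix the dictionary between ultrafilters and infinite paths: the map $\tau\colon E^{\infty}\to\Eu(\SGE)$, $\omega\mapsto\mathcal{F}_{\omega}$, is a homeomorphism, so an idempotent filter is an ultrafilter exactly when it has the form $\mathcal{F}_{\omega}=\{f_{\omega_{\vert n}}\mid n\in\N\}$; in particular every ultrafilter is an \emph{infinite} set. Granting this, I would take an arbitrary tight filter $\xi$ and argue by contradiction, assuming $\xi\in\Et(\SGE)\setminus\Eu(\SGE)$.

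Next I would analyse the internal structure of $\xi$ by slicing it into levels $\xi_n=\{e\in\xi\mid e=(\alpha,(1,s(\alpha)),\alpha),\ \vert\alpha\vert=n\}$. Since $\xi$ is a filter, any two of its elements have nonzero product, and two idempotents of equal length are compatible only if they are equal; this forces $\vert\xi_n\vert\le1$. The same compatibility, applied across levels, forces the paths indexing the nonempty levels to be \emph{prefix-coherent}: if $\xi_m$ is indexed by $\beta$ then $\xi_n$ (for $n\le m$) is indexed by the prefix $\beta_{\vert n}$. With this chain structure in hand, the argument splits according to whether $\xi$ is infinite or finite.

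If $\xi$ is infinite, every level is nonempty, so the coherent prefixes assemble into a single $\omega\in E^{\infty}$ with $\mathcal{F}_{\omega}\subseteq\xi$; since $\mathcal{F}_{\omega}$ is an ultrafilter, hence maximal among filters, I conclude $\xi=\mathcal{F}_{\omega}\in\Eu(\SGE)$, contradicting the choice of $\xi$. The genuinely delicate case—and the only place where tightness is used beyond mere filter/ultrafilter combinatorics—is when $\xi$ is finite, say indexed by the prefixes of a fixed path $\alpha$ with $\vert\alpha\vert$ maximal. Here I would invoke that $E$ has no sources to append a positive-length path $\beta$ to $\alpha$, chosen so that $f:=(\alpha\beta,(1,s(\beta)),\alpha\beta)$ avoids a prescribed finite set $Y\subseteq E(\SGE)\setminus\xi$; taking any finite $X\subseteq\xi$, one checks that $Z=\{f\}$ is a finite cover of $E(\SGE)^{X,Y}$ and that $f\Cap e$ for every $e\in\xi$, so the tight-filter criterion of \cite[Theorem 12.9]{Exel1} forces $f\in\xi$. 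This contradicts finiteness of $\xi$, since $f$ has length exceeding $\vert\alpha\vert$, and completes the argument. I expect the main obstacle to be exactly this last step: verifying that $\{f\}$ is genuinely a cover of $E(\SGE)^{X,Y}$ in Exel's sense, which is precisely where the absence of sources (to guarantee that an extending $\beta$ exists) and row-finiteness must enter, so that no idempotent lying below all of $X$ and orthogonal to all of $Y$ can avoid meeting $f$.
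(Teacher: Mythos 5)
Your proposal follows the paper's proof essentially step for step: the identification $\tau\colon E^{\infty}\to\Eu(\SGE)$, the level sets $\xi_n$ with $\vert\xi_n\vert\le 1$ and prefix coherence, the maximality argument when $\xi$ is infinite, and the tightness argument when $\xi$ is finite. The one step you explicitly leave unverified --- that $Z=\{f\}$ is a cover of $E(\SGE)^{X,Y}$ --- is indeed the crux, and as stated it does not hold: if $s(\alpha)$ receives two distinct edges $b\ne b'$ and $\beta$ begins with $b$, then the idempotent $e_{\alpha b'}=(\alpha b',(1,s(b')),\alpha b')$ lies below every element of $X\subseteq\xi$ (all of which are indexed by prefixes of $\alpha$), so for instance with $Y=\emptyset$ it belongs to $E(\SGE)^{X,Y}$ yet is orthogonal to $f=(\alpha\beta,(1,s(\beta)),\alpha\beta)$. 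Choosing $f$ merely to avoid a finite set $Y$ cannot repair this, since the obstruction is the branching at $s(\alpha)$, not membership in $Y$. So the finite case, as you have set it up, does not close.

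The repair is exactly where the two hypotheses you name must enter, but in a different way than through the choice of a single $f$. Let $\alpha$ be the longest path indexing an element of the finite filter $\xi$, take $X=\{e_\alpha\}$ and $Y=\emptyset$, and set
$$Z:=\{\,e_{\alpha b}=(\alpha b,(1,s(b)),\alpha b)\ \mid\ b\in E^1,\ r(b)=s(\alpha)\,\}.$$
Because $E$ has no sources, $Z\ne\emptyset$; because $E$ is row-finite, $Z$ is finite. And $Z$ genuinely covers $E(\SGE)^{X,\emptyset}=\mathcal{J}_{e_\alpha}$: every nonzero idempotent below $e_\alpha$ has the form $e_{\alpha\mu}$, and it meets $e_{\alpha b}$ where $b$ is the first edge of $\mu$ (or meets every element of $Z$ when $\mu$ is empty). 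Tightness then forces some $e_{\alpha b}\in\xi$, contradicting the maximality of $\vert\alpha\vert$. With this substitution the finite case, and hence the whole argument, goes through; the infinite case of your proposal is correct as written.
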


%%%%%%%%%%%%%%%%%%%%%%%%%

\subsection{The properties}\label{ssec:Properties}

Finally, we are ready to obtain the desired characterizations. Notice that, since the final aim is to characterize simplicity of $\OGE$, and this property is Morita invariant, using Theorem \ref{Thm:Desin} we can reduce the problem to the case $E$ is a row-finite graph without sources. Moreover, under this restriction Proposition \ref{Prop:UltraIsTight} holds. So, we can use the arguments in \cite{EP1} involving actions $\SGE \curvearrowright E^{\infty}$ in this context. Thus, we can look at the results in \cite[Sections 12-15]{EP1} and fix the hypotheses to make work them in this context.

First, with respect to Hausdorffness of $\CG$, we have the following result.

\begin{theorem}[c.f. {\cite[Theorem 12.2]{EP1}}]\label{Thm:Hauss}
Let $(G,E,\varphi)$ a triple with $E$ being a row-finite graph without sources. Then, the following are equivalent:
\begin{enumerate}
\item For every $g\in G$ and for every $x\in E^0$ there exists a finite number of minimal strongly fixed paths for $g$ with range $x$.
\item $\CG$ is Hausdorff.
\end{enumerate}
\end{theorem}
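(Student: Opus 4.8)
The plan is to establish the equivalence by analyzing when two distinct germs in $\CG$ with the same range and source cannot be separated by open sets, since a locally compact ample groupoid fails to be Hausdorff exactly when such inseparable pairs exist. Concretely, I would first recall that $\CG$ is Hausdorff if and only if for every $s=(\alpha,g,\beta)\in\SGE$, the set of units fixed by the corresponding partial homeomorphism is open in $\CG^{(0)}\cong E^\infty$ (this is the standard criterion for groupoids of germs, essentially \cite[Theorem 12.2]{EP1}). A germ $[\alpha,g,\beta;\eta]$ lies on a nontrivial isotropy arc that is ``glued'' to the unit space precisely when $\eta$ is a limit of units fixed by $(\alpha,g,\beta)$ but $\eta$ itself is \emph{not} fixed via an idempotent below $(\alpha,g,\beta)$; so the obstruction to Hausdorffness is exactly the failure of the fixed-point set to be open.

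Next I would translate ``$\eta$ is fixed by $(\alpha,g,\beta)$'' into path language. Using the action $(\alpha,g,\beta)\cdot\eta=\alpha(g\widehat\eta)$ for $\eta=\beta\widehat\eta$, a unit $\eta\in Z(\beta)$ is fixed iff $\alpha(g\widehat\eta)=\beta\widehat\eta$. I would reduce to the diagonal isotropy case (where $\alpha=\beta=:\gamma$ and $g\in\mathrm{St}_G(s(\gamma))$), since these are the sources of non-Hausdorff behavior; there $\eta=\gamma\widehat\eta$ is fixed iff $g\widehat\eta=\widehat\eta$. I would then recall (or import from \cite{EP1}) the notion of a \emph{strongly fixed path} for $g$: a finite path $\mu$ with $g\mu=\mu$ and $\varphi(g,\mu)\cdot x = x$ on the appropriate vertex, so that $Z(\mu)$ consists of fixed infinite paths, and a \emph{minimal} such path is one admitting no strictly shorter strongly fixed prefix witnessing the same fixed behavior. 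The key combinatorial fact is that the set of fixed infinite paths $\{\eta : g\eta=\eta\}$ decomposes as the union $\bigcup_\mu Z(\mu)$ over minimal strongly fixed paths $\mu$ for $g$.

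The heart of the argument is then the following equivalence, which I would prove in both directions. If for every $g$ and every $x$ there are only finitely many minimal strongly fixed paths for $g$ with range $x$, then the fixed-point set $\{\eta\in Z(\gamma): g\eta=\eta\}$ is a finite union of basic open cylinders $Z(\gamma\mu)$, hence open, and openness of all such fixed-point sets yields Hausdorffness. Conversely, if some $g$ and $x$ admit infinitely many minimal strongly fixed paths $\{\mu_n\}$ with range $x$, I would extract an infinite path $\eta$ that is a limit of the fixed paths $\eta_n\in Z(\mu_n)$ but is itself not fixed by any idempotent dominated by the relevant element: the germ $[\,x,g,x;\eta\,]$ is distinct from the unit $\eta$ yet every neighborhood of it meets the unit space, producing a pair of points with no disjoint neighborhoods. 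Here I would use Proposition \ref{Prop:UltraIsTight} to ensure $\Et(\SGE)=\Eu(\SGE)\cong E^\infty$, so that the limiting procedure stays inside the path space and the cylinder sets $Z(\cdot)$ genuinely form a basis.

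The main obstacle I anticipate is the converse direction, specifically the construction of the limit point $\eta$ and the verification that it is \emph{not} strongly fixed at any finite stage (so that the germ is genuinely off the diagonal) while still being a limit of diagonal germs. This requires care: one must use a compactness/diagonalization argument on the infinitely many minimal paths $\mu_n$ (invoking that $E$ is row-finite, hence each $Z(\gamma)$ is compact) to produce $\eta$, and then argue that minimality of each $\mu_n$ precludes $\eta$ from having a strongly fixed prefix witnessing its own fixedness. The row-finite, no-sources hypotheses enter exactly here, ensuring the path space is compact and that the relevant cylinders are nonempty, so the limiting infinite path exists; much of the remaining bookkeeping is routine once the correspondence between strongly fixed paths and open fixed-point sets is set up, and can be imported wholesale from the finite-graph proof in \cite[Theorem 12.2]{EP1}.
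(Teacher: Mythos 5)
Your overall architecture matches what the paper intends: the paper's own ``proof'' is literally a citation of \cite[Theorem 12.2]{EP1}, with Proposition \ref{Prop:UltraIsTight} supplying the identification $\Et(\SGE)=\Eu(\SGE)\cong E^\infty$ that lets the finite-graph argument run, and you correctly identify that proposition as the new ingredient. Your construction for the direction $(2)\Rightarrow(1)$ is also essentially the right one: row-finiteness makes $Z(x)$ compact, distinct minimal strongly fixed paths give pairwise disjoint cylinders, and a limit of points chosen from infinitely many such cylinders is fixed but not fixed via any idempotent, producing a non-unit germ inseparable from the unit beneath it.

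The forward direction, however, rests on two misstatements that are jointly inconsistent with the theorem itself. You assert (a) that $\CG$ is Hausdorff iff the fixed-point set of each element of $\SGE$ is \emph{open}, and (b) that $\{\eta : g\eta=\eta\}=\bigcup_\mu Z(\mu)$, the union taken over minimal strongly fixed paths $\mu$. Since every $Z(\mu)$ is open, (a) and (b) together would force $\CG$ to be Hausdorff unconditionally, with no finiteness hypothesis needed --- contradicting the equivalence you are proving. Both claims are false: the genuine fixed-point set $\{\eta:g\eta=\eta\}$ is automatically \emph{closed} (it is the equalizer of two continuous maps into a Hausdorff space) and may strictly contain $\bigcup_\mu Z(\mu)$, because an infinite path can be fixed by $g$ without any finite prefix being strongly fixed --- this is exactly the source of non-Hausdorffness. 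The correct criterion concerns the set of \emph{trivially} fixed units of a diagonal element $s=(\alpha,g,\alpha)$, i.e.\ those $\eta$ at which the germ of $s$ is a unit; that set equals $\bigcup_\mu Z(\mu)$, is automatically open, and $\CG$ is Hausdorff iff it is also \emph{closed} (equivalently, iff $J_s=\{e\in E(\SGE):e\le s\}$ admits a finite cover, the criterion from \cite{EP2} used in \cite{EP1}). With this correction the forward implication is immediate: finitely many minimal strongly fixed paths make $\bigcup_\mu Z(\mu)$ a finite union of clopen cylinders, hence closed. Finally, note that ``strongly fixed'' requires $\varphi(g,\mu)=1$, not merely that $\varphi(g,\mu)$ fixes $s(\mu)$; the weaker condition does not yield an idempotent $(\alpha\mu,1,\alpha\mu)\le(\alpha,g,\alpha)$ and would change which paths are counted in condition (1).
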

\begin{proof}
It is the same proof as this of \cite[Theorem 12.2]{EP1}.
\end{proof}

Next, we characterize minimality, as follows.

\begin{theorem}[c.f. {\cite[Theorem 13.6]{EP1}}]\label{Thm:Minimal}
Let $(G,E,\varphi)$ a triple with $E$ being a row-finite graph without sources. Then, the following are equivalent:
\begin{enumerate}
\item The standard action $G\curvearrowright E^{\infty}$ is irreducible.
\item $\CG$ is minimal.
\item $E$ is weakly $G$-transitive.
\end{enumerate}
\end{theorem}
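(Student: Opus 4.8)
The plan is to reduce everything to the finite-graph argument of \cite[Theorem 13.6]{EP1}, the only missing ingredient of which, in the countable setting, is the identification of the tight spectrum with the path space; this is now supplied by Proposition \ref{Prop:UltraIsTight}. Since we have already reduced to the case $E$ row-finite without sources, we have $\Et(\SGE)=\Eu(\SGE)\cong E^{\infty}$, so the action of $\SGE$ on its tight spectrum is exactly the standard action on $E^{\infty}$, with the same basis of compact open bisections $\Theta(\alpha,g,\beta;Z(\gamma))$ as in the finite case. Thus I expect the entire structure of the original proof to transport, with the only change being that $E^{\infty}$ is now merely locally compact.

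For the implication $(2)\Leftrightarrow(1)$ I would invoke the general fact that an ample groupoid of germs arising from a tight inverse semigroup action is minimal precisely when the underlying action on the unit space is minimal, i.e.\ when the unit space admits no proper nonempty open invariant subset. Under the homeomorphism $\CG^{(0)}\cong E^{\infty}$ and the description of $r$ and $s$ recorded in Section \ref{sec:ssE}, a subset of $E^{\infty}$ is invariant under the equivalence relation induced by $\CG$ if and only if it is invariant under both the tail structure of $E$ and the action of $G$; this is exactly the invariance appearing in the definition of an irreducible standard action. Hence $\CG$ minimal is equivalent to every $\CG$-orbit being dense, equivalently to the standard action $G\curvearrowright E^{\infty}$ being irreducible.

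The combinatorial core is $(1)\Leftrightarrow(3)$. Here I would translate density of orbits into the graph condition: given $\omega\in E^{\infty}$ and a basic open set $Z(\gamma)$, weak $G$-transitivity should produce an element of the $\CG$-orbit of $\omega$ lying in $Z(\gamma)$, using that from $r(\gamma)$ one can extend $\gamma$ to an infinite path (row-finiteness and the absence of sources guarantee this) and that the vertices reachable along $\omega$ are brought onto $\gamma$ by the combined graph/group moves. Conversely, a vertex/path pair witnessing the failure of weak $G$-transitivity yields a nonempty proper invariant set whose complement is open, nonempty and invariant, so some orbit fails to be dense. This is verbatim the argument of \cite[Theorem 13.6]{EP1} once cylinders are known to form a basis.

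The hard part is not any new estimate but verifying that the compactness of $E^{\infty}$, available in the finite case, is nowhere essential in the original proof. The density arguments are local—they only test membership in cylinders $Z(\gamma)$—so they are insensitive to the passage from compact to locally compact. The one place where compactness could have been used implicitly is in asserting that tight filters are genuine infinite paths rather than ``filters at infinity''; Proposition \ref{Prop:UltraIsTight} removes exactly this gap. Once that point is checked, the proof is complete.
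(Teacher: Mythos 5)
Your proposal is correct and matches the paper's approach exactly: the paper's proof of Theorem \ref{Thm:Minimal} consists precisely of the observation that, once the reduction to the row-finite sourceless case and the identification $\Et(\SGE)=\Eu(\SGE)\cong E^{\infty}$ of Proposition \ref{Prop:UltraIsTight} are in place, the argument of \cite[Theorem 13.6]{EP1} carries over verbatim. You have in fact supplied more detail than the paper does on why compactness of $E^{\infty}$ is never needed, which is the right point to check.
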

\begin{proof}
It is the same proof as this of \cite[Theorem 13.6]{EP1}.
\end{proof}

Finally, we characterize the groupoid being essentially principal, as follows.

\begin{theorem}[c.f. {\cite[Theorem 14.10]{EP1}}]\label{Thm:EssPrin}
Let $(G,E,\varphi)$ a triple with $E$ being a row-finite graph without sources. Then, the standard action $G\curvearrowright E^{\infty}$ is topologically free (equivalently, $\CG$ is essentially principal) if and only if the following two conditions hold:
\begin{enumerate}
\item Every $G$-circuit has an entry.
\item Given a vertex $x\in E^0$ and a group element $g\in G$, if $g$ fixes $Z(x)$ pointwise then $g$ is slack at $x$.
\end{enumerate}
\end{theorem}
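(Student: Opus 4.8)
The plan is to follow the same strategy used for Theorems \ref{Thm:Hauss} and \ref{Thm:Minimal}: verify that the argument of \cite[Theorem 14.10]{EP1} goes through verbatim once we know that $\Et(\SGE)$ is identified with $E^{\infty}$. The only place where finiteness of $E$ entered the original proof was through the homeomorphism $\Et(\SGE)\cong E^{\infty}$, and this is now supplied in the required generality (for $E$ row-finite without sources) by Proposition \ref{Prop:UltraIsTight} together with the homeomorphism $\tau$. Thus $\CG$ is the groupoid of germs of the standard action $\SGE\curvearrowright E^{\infty}$, and, as recorded in the statement, being essentially principal is equivalent to this action being topologically free, i.e. to the density of the set of units carrying only trivial isotropy germs.

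First I would record the general criterion in our coordinates. A germ $[\alpha,g,\beta;\omega]$ lies in the isotropy over $\omega=\beta\widehat{\omega}$ precisely when $\alpha(g\widehat{\omega})=\omega$, and it is a nontrivial element of the interior of the isotropy bundle exactly when it fixes an entire cylinder $Z(\gamma)\subseteq Z(\beta)$ without agreeing there with the germ of an idempotent. Hence topological freeness fails if and only if some such nontrivial germ fixes a nonempty open set.

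For the forward implication I would argue contrapositively. If condition (1) fails, a $G$-circuit with no entry yields an infinite path which, together with its translates, fills an open set of points all carrying a nontrivial fixed germ arising from traversing the circuit, so the action is not topologically free. If condition (2) fails, there are $x\in E^{0}$ and $g\in G$ fixing $Z(x)$ pointwise with $g$ not slack at $x$; then $(x,g,x)$ determines a nontrivial germ over every $\omega\in Z(x)$, again contradicting topological freeness. For the converse I would assume both conditions and suppose, for contradiction, that some $s=(\alpha,g,\beta)$ has a nontrivial fixed germ on a cylinder $Z(\gamma)$. Analysing the equation $\alpha(g\widehat{\omega})=\beta\widehat{\omega}$ as $\widehat{\omega}$ ranges over a cylinder forces either the appearance of a $G$-circuit without entry (excluded by (1)) or a group element fixing some $Z(x)$ pointwise while failing to be slack (excluded by (2)).

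The main obstacle is checking that no step of the finite-graph argument secretly relied on compactness of $E^{\infty}$ rather than mere local compactness. Since $E$ is row-finite, each cylinder $Z(\gamma)$ is still compact and open, so the constructions that matter here (choosing entries, extending finite paths to cut down the fixed locus, and the nowhere-density estimates) all take place inside a single compact cylinder and transfer without change; the passage from $\Eu(\SGE)$ to $\Et(\SGE)$ is exactly what Proposition \ref{Prop:UltraIsTight} guarantees. Once this is confirmed, the remaining combinatorial bookkeeping about $G$-circuits, entries, and slackness is identical to \cite{EP1}.
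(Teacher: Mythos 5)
Your proposal is correct and follows exactly the route the paper takes: its proof of Theorem \ref{Thm:EssPrin} is simply the observation that the argument of \cite[Theorem 14.10]{EP1} carries over verbatim once Proposition \ref{Prop:UltraIsTight} identifies $\Et(\SGE)$ with $E^{\infty}$ in the row-finite, no-sources setting. Your write-up is in fact more explicit than the paper's one-line justification, but the underlying idea is the same.
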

\begin{proof}
It is the same proof as this of \cite[Theorem 14.10]{EP1}.
\end{proof}

Hence, we conclude the following characterization of simplicity.

\begin{theorem}[c.f. {\cite[Theorem 16.1]{EP1}}]\label{Thm:Simple}
Let $(G,E,\varphi)$ a triple with $E$ countable graph, and let $(G,F ,\widehat{\varphi})$ be its desingularization. If $G$ is amenable and $\mathcal{G}_{\text{tight}}^{(G,F)}$ is Hausdorff, then $\OGE$ is simple if and only if the following conditions are satisfied:
\begin{enumerate}
\item $F$ is weakly $G$-transitive.
\item Every $G$-circuit in $F$ has an entry.
\item Given a vertex $x\in F^0$ and a group element $g\in G$, if $g$ fixes $Z_F(x)$ poinwise then $g$ is slack at $x$.
\end{enumerate}
\end{theorem}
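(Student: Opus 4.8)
The plan is to combine the Morita reduction of Theorem~\ref{Thm:Desin} with the groupoid picture of Theorem~\ref{Thm:GroupoidRowFiniteNoSources}, and then to invoke the standard simplicity criterion for $C^*$-algebras of \'etale groupoids, finally translating the groupoid conditions back into the graph-theoretic language of the desingularized triple.

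First I would use Theorem~\ref{Thm:Desin}: the algebras $\OGE$ and $\mathcal{O}_{G,F}$ are strongly Morita equivalent, and since simplicity is preserved under strong Morita equivalence, $\OGE$ is simple if and only if $\mathcal{O}_{G,F}$ is simple. As the desingularized graph $F$ is by construction row-finite with no sources, Theorem~\ref{Thm:GroupoidRowFiniteNoSources} identifies $\mathcal{O}_{G,F}$ with the full groupoid $C^*$-algebra $C^*(\mathcal{G}_{\text{tight}}^{(G,F)})$. The problem thus reduces to deciding when this groupoid $C^*$-algebra is simple.

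Next I would establish that $\mathcal{G}_{\text{tight}}^{(G,F)}$ is amenable. Since $G$ is amenable and $F$ is row-finite without sources, this follows from the amenability criterion \cite[Corollary~10.18]{EP1}, whose proof carries over to the countable row-finite setting once $\Et(\SGE)=\Eu(\SGE)$ is available via Proposition~\ref{Prop:UltraIsTight}. Amenability ensures that the full and reduced groupoid $C^*$-algebras coincide, so I may appeal to the standard theorem that a second countable, Hausdorff, \'etale, amenable groupoid has a simple $C^*$-algebra precisely when it is both minimal and essentially principal (effective). Here the hypothesis that $\mathcal{G}_{\text{tight}}^{(G,F)}$ be Hausdorff is exactly what the criterion requires; by Theorem~\ref{Thm:Hauss} it is governed by the condition that, for each $g$ and each $x\in F^0$, there are only finitely many minimal strongly fixed paths for $g$ with range $x$.

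Finally I would translate the two groupoid properties into conditions on $(G,F,\widehat{\varphi})$. Minimality of $\mathcal{G}_{\text{tight}}^{(G,F)}$ is equivalent to $F$ being weakly $G$-transitive by Theorem~\ref{Thm:Minimal}, which gives condition~(1); and essential principality of $\mathcal{G}_{\text{tight}}^{(G,F)}$ is equivalent, by Theorem~\ref{Thm:EssPrin}, to the conjunction of ``every $G$-circuit in $F$ has an entry'' with the slackness condition, which gives conditions~(2) and~(3). Assembling these equivalences yields the stated characterization. I expect the main obstacle to be the amenability step: one must check that passing to the desingularization and to the merely locally compact (non-compact) unit space does not break the hypotheses of the groupoid simplicity theorem, and in particular that amenability of $G$ still forces amenability of the tight groupoid in this infinite regime. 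Once that is secured, the remaining steps are essentially bookkeeping built on the three characterization theorems already in hand.
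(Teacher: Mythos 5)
Your proposal follows essentially the same route as the paper: reduce via the Morita equivalence of Theorem~\ref{Thm:Desin}, pass to the tight groupoid picture, and apply the simplicity criterion for \'etale groupoids (the paper cites \cite[Theorem 5.1]{SimpleGroupoid}) together with Theorems~\ref{Thm:Hauss}, \ref{Thm:Minimal} and~\ref{Thm:EssPrin} to translate minimality and effectiveness into conditions on $(G,F,\widehat{\varphi})$. The only difference is that you spell out the amenability step (via \cite[Corollary 10.18]{EP1} and Proposition~\ref{Prop:UltraIsTight}) more explicitly than the paper does, which is a reasonable elaboration rather than a different argument.
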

\begin{proof}
This is because of Theorem \ref{Thm:Desin}, Theorem \ref{Thm:Hauss}, Theorem \ref{Thm:Minimal}, Theorem \ref{Thm:EssPrin} and \cite[Theorem 5.1]{SimpleGroupoid}.
\end{proof}

Unfortunately, when $E$ is an infinite graph, \cite[Theorem 15.1]{EP1} is false, because the implication $(iv)\Rightarrow (i)$ do not work. Fortunately, there is a condition, generalizing  \cite[Theorem 15.1(iv)]{EP1}, which allows to show an analog result.

\begin{theorem}[c.f. {\cite[Theorem 16.1]{EP1}}]\label{Thm:PurInf}
Let $(G,E,\varphi)$ a triple with $E$ countable, row finite graph with no sinks. Then, the following are equivalent:
\begin{enumerate}
\item $\SGE$ is a locally contracting inverse semigroup.
\item The standard action $\theta: G\curvearrowright E^{\infty}$ is locally contracting.
\item $\CG$ is a locally contracting groupoid.
\item For every $x\in E^0$ there exists $\alpha_x\in E^ *$ with $r(\alpha_x)=x$ and there exists a $G$-circuit $(g, \gamma)$ with entries such that $s(\alpha_x)=r(\gamma)$.
\end{enumerate}
\end{theorem}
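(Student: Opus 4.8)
The plan is to treat the three abstract conditions $(1)$, $(2)$, $(3)$ as mutually equivalent reformulations of one and the same contraction property—phrased for the inverse semigroup $\SGE$ (condition $(1)$), for the standard action on $E^{\infty}$ (condition $(2)$), and for the groupoid of germs $\CG$ (condition $(3)$)—and then to reduce everything to the concrete graph condition $(4)$. The equivalences $(1)\Leftrightarrow(2)\Leftrightarrow(3)$ I would carry over from the finite case treated in \cite[Theorem 15.1]{EP1} with only notational changes: they depend solely on the dictionary relating $\SGE$, its action on $E^{\infty}$, and $\CG$, and this dictionary is unchanged once we know, by Proposition \ref{Prop:UltraIsTight}, that $\Et(\SGE)=\Eu(\SGE)\cong E^{\infty}$, so that the unit space is still the path space with the compact open cylinders $Z(\gamma)$ as a basis. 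Thus the entire content lies in the equivalence of this common property with $(4)$.

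For $(4)\Rightarrow(3)$ I would exhibit a contracting bisection inside every nonempty open set directly. Given nonempty open $U\subseteq E^{\infty}$, refine it to a cylinder $Z(\delta)\subseteq U$ and set $x:=s(\delta)$. Applying $(4)$ at $x$ yields a path $\alpha_x$ with $r(\alpha_x)=x$ and $s(\alpha_x)=r(\gamma)$, together with a $G$-circuit $(g,\gamma)$ with entries. Since $r(\alpha_x)=x=s(\delta)$, the concatenation $\delta\alpha_x$ is a legal path, and the defining relation of a $G$-circuit gives $s(\gamma)=g\cdot r(\gamma)=g\cdot s(\delta\alpha_x)$, so that $t:=(\delta\alpha_x\gamma,\,g,\,\delta\alpha_x)\in\SGE$. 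Its bisection $\Theta(\delta\alpha_x\gamma,g,\delta\alpha_x;Z(\delta\alpha_x))$ has source $Z(\delta\alpha_x)$ and range $Z(\delta\alpha_x\gamma)$; taking $V:=Z(\delta\alpha_x)\subseteq Z(\delta)\subseteq U$ (nonempty because, $E$ having no sinks, every finite path can be prolonged), the element $t$ sends $\overline V=V$ into $Z(\delta\alpha_x\gamma)$, and the fact that the circuit has an entry forces $Z(\delta\alpha_x\gamma)\subsetneq V$. This is precisely a contraction of $V$, so $\CG$ is locally contracting.

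For the converse $(3)\Rightarrow(4)$ I would run the same construction backwards. Fix $x\in E^{0}$ and apply local contraction to $U=Z(x)$: this produces a cylinder $V=Z(\delta)\subseteq Z(x)$ (so $r(\delta)=x$) and an element $s=(\mu,h,\delta)\in\SGE$ whose bisection contracts $Z(\delta)$ into a proper subcylinder $Z(\mu)\subsetneq Z(\delta)$. Properness forces $\mu=\delta\gamma$ with $\gamma$ of positive length, while membership $s\in\SGE$ yields $s(\gamma)=h\cdot r(\gamma)$, which is exactly the relation making $(h,\gamma)$ a $G$-circuit; the strict inclusion $Z(\delta\gamma)\subsetneq Z(\delta)$ says precisely that this circuit has an entry. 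Putting $\alpha_x:=\delta$ we get $r(\alpha_x)=x$ and $s(\alpha_x)=s(\delta)=r(\gamma)$, which is condition $(4)$ at $x$; as $x$ was arbitrary, $(4)$ holds.

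The delicate point—and the reason \cite[Theorem 15.1]{EP1} fails verbatim for infinite $E$—is the implication $(4)\Rightarrow(1)$, the analogue of the problematic $(iv)\Rightarrow(i)$ in \emph{loc.\ cit.} In the finite, cofinal setting a single $G$-circuit with entries is automatically reachable from every vertex, but over an arbitrary infinite graph this is no longer guaranteed, so the hypothesis must be quantified \emph{per vertex}; this is exactly what the ``for every $x\in E^{0}$'' clause in $(4)$ repairs. The main thing to verify with care is therefore that this per-vertex reachability really produces a contraction inside \emph{every} cylinder, rather than merely for one distinguished open set—which is the content of the explicit construction above—and the no-sinks hypothesis is what keeps the cylinders $Z(\delta\alpha_x\gamma)$ appearing there nonempty.
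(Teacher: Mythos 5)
There is a genuine gap, in fact two. First, your architecture does not close the cycle of implications. What the general theory actually supplies (Proposition \ref{Prop:UltraIsTight} together with \cite[Theorem 6.5]{EP2} and \cite[Proposition 6.3]{EP2}) is $(1)\Leftrightarrow(2)$ and $(2)\Rightarrow(3)$; it does \emph{not} give $(3)\Rightarrow(2)$ or $(3)\Rightarrow(1)$, and your claim that the three abstract conditions are ``mutually equivalent reformulations'' carried over from \cite[Theorem 15.1]{EP1} assumes exactly the missing direction -- there, as here, the loop is closed by passing through the graph condition. The paper therefore proves $(iv)\Rightarrow(i)$ at the level of the inverse semigroup: from $e=(\mu,(1,s(\mu)),\mu)$ it builds $s=(\widehat{\beta},(g_{k+1}^{-1},s(\alpha)),\beta)$ and idempotents $f_0\leq f_1$ with $f_1\leq s^*s\leq e$, $sf_1s^*\leq f_1$ and $f_0sf_1=0$, then invokes \cite[Proposition 6.7]{EP2}. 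Your $(4)\Rightarrow(3)$ construction is essentially the groupoid shadow of this and is correct as far as it goes, but it lands at the wrong node: even granting every step you wrote, you would have $(1)\Leftrightarrow(2)\Rightarrow(3)\Leftrightarrow(4)$ with no way back to $(1)$.

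Second, your $(3)\Rightarrow(4)$ is not a proof. Local contractivity applied to $U=Z(x)$ hands you an arbitrary open $V\subseteq U$ and an arbitrary open bisection $S$ with $\overline V\subseteq S^{-1}S$ and $S\overline VS^{-1}\subsetneq V$; it does not hand you a cylinder $V=Z(\delta)$ together with a single basic bisection $\Theta(\mu,h,\delta;Z(\delta))$ whose source is exactly $V$. The obvious refinement fails: if you shrink $V$ to a sub-cylinder $Z(\delta')$ on which $S$ is basic, the image $SZ(\delta')S^{-1}$ is only known to lie in $V$, not in $Z(\delta')$, so the containment $\mu=\delta\gamma$ from which you read off the circuit is not forced -- $\mu$ and $\delta$ may be incomparable paths out of $x$. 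The paper sidesteps this entirely by proving $(iii)\Rightarrow(iv)$ contrapositively: a $G$-circuit without entry yields an isolated point $\{\omega\}$, an open set that no bisection can contract; and a vertex not connecting to any $G$-circuit yields, via the $G$-saturated subtree $\widehat H$ rooted there, open sets over which no contraction can occur. You should either adopt that contrapositive strategy or supply a real argument (iteration of $S$ plus compactness of $\overline V$, say) extracting a circuit from an arbitrary contracting bisection. Your closing observations about the per-vertex quantifier in $(4)$ and the role of the no-sinks hypothesis are correct and well taken, but they do not repair these two defects.
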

\begin{proof}

$(i)\Leftrightarrow (ii)$ By Proposition \ref{Prop:UltraIsTight} and \cite[Theorem 6.5]{EP2}.

$(ii)\Rightarrow (iii)$ It follows immediately by \cite[Proposition 6.3]{EP2}.

$(iii)\Rightarrow (iv)$ First, suppose that $(g, \gamma)$ is a $G$-circuit with no entries. Then, $\omega:=\gamma^1\gamma^2\cdots\in E^\infty$ is an isolated point in $E^\infty$. Thus, 
$$\emptyset \ne U=\{\omega\}\subset E^\infty$$
is an open set, and there are no nonempty open set $V\subseteq U$ and an open bisection $S\subseteq \CG$ such that $\overline{V}\subseteq S^{-1}S$ and $S\overline{V}S^{-1}\subsetneq V$. So, every $G$-circuit in $E$ must to have an entry.

Next, suppose that  $x\in E^0$  do not connect with any $G$-circuit. Consider the subtree $H$ of $E$ with root $x$. By assumption, it do not contain any $G$-circuit (in particular, any circuit). Now, consider $\widehat H$ the subgraph of $E$ generated by $H$ under the action of $G$ (on $E$), and notice that $\widehat H$ cannot contain any $G$-circuit. Moreover, for every $\alpha\in\widehat{H}^*$ and for every $g\in G$, we have $g\alpha\in \widehat{H}^*$. Thus, $(G, \widehat{H}, \varphi_{\vert \widehat{H}})$ is a subtriple of $(G,E,\varphi)$, and the inclusion $\iota: \widehat{H}\hookrightarrow E$ induces a natural inclusion $\widehat{\iota}:\mathcal{G}_{tight}^{(G, \widehat{H})}\hookrightarrow \CG$ of topological groupoids. Now, given any nonempty open subset $U\subseteq \CG^{(0)}$, there exists $\alpha\in E^*$ such that $\emptyset \ne Z(\alpha)\subseteq U$. Hence, the subtree rooted on $s(\alpha)$ is acyclic, and since the action of $G$ do not generate $G$-circuits there is no open bisection $S\subseteq \CG$ such that $\overline{V}\subseteq S^{-1}S$ and $S\overline{V}S^{-1}\subsetneq V$. Thus, $\mathcal{G}_{tight}^{(G, \widehat{H})}$ is not locally contracting, and then so does $\CG$, contradicting the assumption.

$(iv)\Rightarrow (i)$ Let $0\ne e\in E(\SGE)$, written $e=(\mu, (1, s(\mu)), \mu)$ for some $\mu\in E^*$. By hypothesis, there exist $\alpha\in E^*$ with $r(\alpha)=s(\mu)$ and a $G$-circuit $(g,\gamma)$ with entry $\tau$ such that $s(\alpha)=r(\gamma)$. If $\omega:= \gamma^1\gamma^2\cdots\in E^\infty$, there exists $k\in \N$ such that $s(\gamma^{k-1})=r(\tau)=r(\gamma^k)$ and $\tau\ne \gamma^k$. Define $\widehat{\gamma}:=\gamma^1\gamma^2\cdots \gamma^k$, $\beta:=\mu\alpha$ and $\widehat{\beta}:=\mu\alpha\widehat{\gamma}$. These are well-defined finite paths in $E$, and moreover $s(\alpha)=r(\widehat{\gamma})=g_{k+1}s(\widehat{\gamma})$. Hence, $s:=(\widehat{\beta}, (g_{k+1}^{-1}, s(\alpha)), \beta)\in \SGE$. Take $f_1:=(\beta, (1, s(\alpha)), \beta)$, and notice that
$$sf_1s^*=(\widehat{\beta}, (g_{k+1}^{-1}, s(\alpha)), \beta) (\beta, (1, s(\alpha)), \beta)  ({\beta}, (g_{k+1}^{-1}, s(\alpha)), \widehat{\beta})=(\widehat{\beta}, (1, s(\widehat{\gamma}), \widehat{\beta})\leq f_1 .$$
Also, $s^*s=(\beta, (1, s(\alpha)), \beta)=f_1\leq e$, so that $f_1\leq s^*s\leq e=e\cdot s^*s$. Now, define $\gamma':=\gamma^1\gamma^2\cdots \gamma^{k-1}\tau\ne \widehat{\gamma}$, and $f_0:=(\mu\alpha\gamma', (1, s(\tau)), \mu\alpha\gamma' )$. Hence, $f_0\leq f_1$ and $f_0\cdot s= (\mu\alpha\gamma', (1, s(\tau)), \mu\alpha\gamma' ) (\widehat{\beta}, (g_{k+1}^{-1}, s(\alpha)), \beta)= 0$ (because $\gamma'\ne \widehat{\gamma}$), whence $f_0sf_1=0$. Thus, $\SGE$ is locally contracting by \cite[Proposition 6.7]{EP2}.
\end{proof}

In particular, if $CG$ is minimal, effective, and $E$ contains at least one $G$-circuit, then $\CG$ is locally contracting. Hence,

\begin{corollary}[c.f. {\cite[Corollary 16.3]{EP1}}]\label{Cor:PurInf}
Let $(G,E,\varphi)$ a triple with $E$ countable graph, and let $(G,F ,\widehat{\varphi})$ be its desingularization. If $G$ is amenable and $\mathcal{G}_{\text{tight}}^{(G,F)}$ is Hausdorff, then whenever $\OGE$ is simple, it is necessarily also purely infinite (simple). 
\end{corollary}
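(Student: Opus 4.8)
The plan is to transport the question to the desingularized graph and there recognize $\mathcal{O}_{G,F}$ as a purely infinite simple groupoid $C^*$-algebra. By Theorem \ref{Thm:Desin} there is a full projection $p\in\mathcal{M}(\mathcal{O}_{G,F})$ with $\OGE\cong p\mathcal{O}_{G,F}p$, and since $F$ is row-finite with no sources, Theorem \ref{Thm:GroupoidRowFiniteNoSources} identifies $\mathcal{O}_{G,F}$ with the full groupoid $C^*$-algebra $C^*(\mathcal{G}_{\text{tight}}^{(G,F)})$. Because a full corner of a purely infinite simple $C^*$-algebra is again purely infinite simple, it suffices to prove that $C^*(\mathcal{G}_{\text{tight}}^{(G,F)})$ is purely infinite simple; the conclusion for $\OGE$ then follows by Morita invariance of pure infiniteness.

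First I would extract the structural consequences of the standing hypothesis that $\OGE$ is simple. Under the Morita equivalence, simplicity of $\OGE$ is equivalent to simplicity of $\mathcal{O}_{G,F}$, and hence, applying Theorem \ref{Thm:Simple} to the row-finite source-free graph $F$, we learn that $F$ is weakly $G$-transitive, that every $G$-circuit in $F$ has an entry, and that the slackness condition holds. By Theorems \ref{Thm:Minimal} and \ref{Thm:EssPrin} these say precisely that $\mathcal{G}_{\text{tight}}^{(G,F)}$ is minimal and effective. Together with the hypotheses that $\mathcal{G}_{\text{tight}}^{(G,F)}$ is Hausdorff and that $G$ is amenable (so that the full and reduced groupoid $C^*$-algebras coincide), we are in the setting where minimality and effectiveness already deliver simplicity via \cite[Theorem 5.1]{SimpleGroupoid}, consistently with Theorem \ref{Thm:Simple}.

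The remaining, and decisive, ingredient is pure infiniteness, which I would obtain by showing that $\mathcal{G}_{\text{tight}}^{(G,F)}$ is locally contracting and then invoking the standard fact that a Hausdorff, ample, minimal, effective, locally contracting groupoid has purely infinite simple reduced $C^*$-algebra. Since the desingularization produces a graph $F$ that is row-finite with neither sources nor sinks, Theorem \ref{Thm:PurInf} applies, and local contraction of $\mathcal{G}_{\text{tight}}^{(G,F)}$ is equivalent to the condition that every vertex reaches a $G$-circuit carrying an entry. In the presence of minimality (weak $G$-transitivity) and effectiveness (every $G$-circuit has an entry) this condition reduces, as recorded in the remark preceding the statement, to the mere existence of at least one $G$-circuit in $F$: every vertex then connects to such a circuit by weak $G$-transitivity, and the circuit is guaranteed an entry by effectiveness. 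This is exactly where I expect the main obstacle to lie. For a \emph{finite} source-free graph, weak $G$-transitivity forces a circuit by a pigeonhole argument along backward edges, so the hypothesis is automatic; for the infinite row-finite graph $F$ this is no longer guaranteed, and one must exclude the stably finite (AF-type) possibility. I would therefore argue that, under the simplicity hypothesis, a $G$-circuit survives in the desingularized graph $F$ by tracking the orbit-connected tails introduced in Section \ref{sec:desing}, which reorganize but do not destroy the cyclic structure of $E$. Once this is secured, local contraction and hence pure infiniteness of $C^*(\mathcal{G}_{\text{tight}}^{(G,F)})$ follow, and transporting back along the full corner $\OGE\cong p\mathcal{O}_{G,F}p$ yields that $\OGE$ is purely infinite simple.
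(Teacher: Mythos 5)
Your overall route is the same as the paper's: reduce to the desingularized triple $(G,F,\widehat{\varphi})$ via the full corner of Theorem \ref{Thm:Desin}, extract minimality and effectiveness of $\mathcal{G}_{\text{tight}}^{(G,F)}$ from simplicity using Theorems \ref{Thm:Simple}, \ref{Thm:Minimal} and \ref{Thm:EssPrin}, and then aim for local contraction via Theorem \ref{Thm:PurInf} to conclude pure infiniteness. You are also right to isolate the existence of a $G$-circuit in $F$ as the crux: given minimality, condition (4) of Theorem \ref{Thm:PurInf} does reduce to the existence of a single $G$-circuit with an entry, which is exactly the remark the paper places immediately before the corollary.

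The gap is in how you propose to produce that $G$-circuit. You claim it ``survives in the desingularized graph $F$ by tracking the orbit-connected tails,'' but desingularization only \emph{adds} acyclic tails (Subsections \ref{ssec:source} and \ref{ssec:emiter}); it never creates circuits, so if $E$ has no $G$-circuits then neither does $F$. Nor does the simplicity hypothesis supply one: take $G$ trivial and $E$ a Bratteli diagram of a simple unital AF algebra (e.g.\ a UHF algebra), so that $\OGE\cong C^*(E)$ is simple, Hausdorff, amenable, minimal and effective, yet stably finite and certainly not purely infinite. So the step ``a $G$-circuit survives in $F$'' cannot be established from the stated hypotheses; the existence of a $G$-circuit must be added as an explicit hypothesis (as the finite-graph pigeonhole argument you mention makes automatic in \cite{EP1}, but which fails for infinite $F$). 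For what it is worth, the paper's own justification carries the clause ``and $E$ contains at least one $G$-circuit'' in the sentence preceding the corollary while omitting it from the corollary's statement, so you have in effect rediscovered a hypothesis that the statement needs rather than failed to prove something that follows from it; but as a proof of the statement as written, your argument does not close.
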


%%%%%%%%%%%%%%%%%%%%%%%%%%%%%%%%%%%%%%%%%%%%%%%%%%%%%%%%%%%

%%%%%%%%%%%%%%%%%%%%%%%%%%%%%%%%%%%%%%%%%%%%%%%%%%%%%%%%%%%

\section*{Acknowledgments}

Part of this work was done  during a visit of the second author to the Departamento de Matem\'atica da Universidade Federal de Santa Catarina (Florian\'opolis, Brasil). This author thanks the host center for its warm hospitality.

%%%%%%%%%%%%%%%%%%%%%%%%%%%%%%%%%%%%%%%%%%%%%%%%%%%%%%%%%%%

\end{document}